\newcolumntype{C}{>{$}c<{$}}
\newcolumntype{L}{>{$}l<{$}}
\newcolumntype{R}{>{$}r<{$}}
 \newtheorem{thm}{Theorem}[section]
 \newtheorem{cor}[thm]{Corollary}
 \newtheorem{lem}[thm]{Lemma}
 \newtheorem{prop}[thm]{Proposition}
 \theoremstyle{definition}
 \newtheorem{defn}[thm]{Definition}
 \theoremstyle{remark}
 \newtheorem{rem}[thm]{Remark}
 \theoremstyle{definition}
 \newtheorem{ex}[thm]{Example}
\newtheorem{prob*}{Problem}
 \newcommand{\PP}{\mathbb{P}}
\def\move-in{\parshape=1.75true in 5true in}
\begin{document}

\title{{On Parameterizations of plane rational curves and their syzygies.}}

\author[A. Bernardi]{Alessandra Bernardi}
\address[Alessandra Bernardi]{Dipartimento di Matematica, Univ. Bologna,  Italy}
\email{alessandra.bernardi5@unibo.it}


\author[A. Gimigliano]{Alessandro Gimigliano}
\address[Alessandro Gimigliano]{Dipartimento di Matematica e CIRAM,  Univ. Bologna, Italy}
\email{Alessandr.Gimigliano@unibo.it }

\author[M. Id\`a]{Monica Id\`a}
\address[Monica Id\`a]{Dipartimento di Matematica,  Univ. Bologna, Italy}
\email{monica.ida@unibo.it }

\maketitle


\begin{abstract}
 {Let $C$ be a plane rational curve of degree $d$ and $p:\tilde C \rightarrow C$ its normalization. We are interested in the {\it splitting type} $(a,b)$ of $C$, where $\mathcal{O}_{\mathbb{P}^1}(-a-d)\oplus \mathcal{O}_{\mathbb{P}^1}(-b-d)$ gives the syzigies of the ideal $(f_0,f_1,f_2)\subset K[s,t]$, and \, $(f_0,f_1,f_2)$ is a parameterization of $C$. We want  to describe in which cases $(a,b)=(k,d-k)$ ($2k\leq d)$, via a  geometric description; namely we show that $(a,b)=(k,d-k)$ if and only if $C$ is the projection of a rational curve on a rational normal surface in $\PP^{k+1}$.}
\end{abstract}


\section{Introduction}
We work over an algebraically closed ground field $K$ {of characteristic 0}. We are
interested in projective morphisms  $f:\PP^1\to\PP^2$, where $f = (f_0,f_1,f_2)$, $f_i\in K[s,t]_d$,  which are generically injective
and generically smooth over their images; thus obtaining that $f(\PP^1)$ is a rational plane curve $C$ parameterized by $(f_0,f_1,f_2)$.

In particular we are interested in studying the {\it splitting type} of $C$, which is the pair $(k,d-k)$ such that the minimal graded free resolution of the ideal $(f_0,f_1,f_2)$, as a graded $S$-module, $S= K[s,t]$, has $S(-k-d)\oplus S(k-2d)$ as its first syzygy module (see (2) in the next section).

It is easy to see (e.g. see \cite{GHI4}) that the splitting type of $C$ which we have just defined is also the pair  $(k,d-k)$, with $2k\leq d$, such that $f^*(\Omega_{\PP^2}(1)) \cong \mathcal{O}_{\PP ^1}(-k)\oplus \mathcal{O}_{\PP ^1}(-d+k)$.

Actually, it is well-known that any
vector bundle on $\PP^1$ splits as a direct sum of line bundles (see \cite{refB, refG}), and the determination of the splitting of the pull back $f^*T_{\mathbb{P}^n}$ (or, which is equivalent, of $f^*\Omega_{\mathbb{P}^n}(1)$), together with the problem of the splitting of the normal bundle, is a quite classical and  very investigated problem in Algebraic Geometry  for rational curves in $\mathbb{P}^n$; e.g. for $n\geq 3$ and smooth rational curves  see  \cite{EV1, EV2, Ram, A2, Hu, GS, MR3147368, Bernardi:2012qb, MR3296188, Cox:2015:SRS:2735601.2741380}). 

From a more applied point of view, this problem, mainly for plane curves, has been studied in relation to the problem of determining the implicit equations of parameterized plane curves (e.g. see \cite{refCSC}), where the splitting type is studied via the equivalent notion of  {\it $\mu$-basis }, which is defined for the ideal of the graph of $C$, in the affine case; here we will also use that approach, just with the difference that we will work in $\PP^2\times \PP^1$.

Our interest in the splitting type for plane curves have stemmed also by the fact that it is strictly correlated to the problem of finding the minimal free resolution of fat points in the plane (see  \cite{GHI1, GHI2, GHI3, GHI4}).

We are interested in finding  the relation  between  the geometry of the curve $C$ and its splitting type.  One possibility is to study the relation  between the singularities of $C$ and its splitting type.
For example, if $C$ has a point of multiplicity $m$, then
results of Ascenzi \cite{refAs1} show that
\begin{equation}\label{Ascenzibnds}
\min(m,d-m)\leq a\leq\min\Big(d-m,\Big\lfloor \frac{d}{2}\Big\rfloor\Big);
\end{equation}

\noindent see also \cite{GHI4}. 
If $2m+1\geq d$, when $m$ is the maximum multiplicity of the points of $C$, it follows from these bounds that $a=\min(m,d-m)$ and hence $b=\max(m,d-m)$.
So we give the following definition.

\begin{defn}
A rational projective plane curve $C$ is \emph{Ascenzi} if it has a point of multiplicity $m$, with $2m+1\geq d$.
\end{defn}

Notice that the Ascenzi behavior is not the general one, in the sense that it is possible that a curve of  splitting type  $(m,d-m)$ does not have any point of multiplicity $m$. For example the curve  $C$ of degree eight, defined by its syzygy matrix $A$ (see (\ref{eqn1}) below), where:
$$A:=\left( \begin{array}{ccc}
s^3&s^2t+st^2&t^3\\
s^5+3s^2t^3&t^5+3s^3t^2&s^5+t^5+st^4
\end{array}\right),$$
has only double points as singularities while its splitting type is $(3,d-3)$. 

  
The non-Ascenzi cases are more difficult to handle {and only partial results are known} (e.g. see \cite{GHI1, GHI2, GHI4}). 

In \cite{refAs1}, the cases $(a,b)=(1,d-1)$ (any $d$) and $(a,b)=(2,d-2)$, $d\geq 3$, are treated; in the first case $C$ has to be Ascenzi, while in the second either $C$ is Ascenzi or it has only nodes as singularities and it is a projection of a smooth curve $D\subset \mathbb{P}^3$ of degree $d$ which is contained in a quadric surface.
\\
Hence another way to relate the geometry of $C$ with its splitting type is to view $C$ as projection of a curve $D$ lying on  some special surfaces.

In this paper we carry on this perspective by determining that the right surfaces to look at are rational normal scrolls. Notice that in general rational curves can be contained in rational normal scrolls only in particular cases, as the following remark shows.

\begin{rem}
Let us consider  a generic projection $C\subset \PP^m$ of  a rational normal curve $C_d\subset \PP^d$. The curve $C$ has a good postulation, therefore $h^0(\mathcal{I}_C(2))={m+2 \choose 2}-2d-1$ (\cite{Hir, BE}). On the other hand, a rational normal surface $S\subset\PP^m$ verifies $h^0(\mathcal{I}_S(2))={m-1\choose 2}$ (see e.g. \cite{segre1884sulle}). Hence, under a generic projection  $C$ can possibly lie on a rational normal surface only if $m\geq \frac{2d+1}{3}$.
\end{rem}

In this paper  we manage to prove, in Theorem \ref{ex-conjecture}, that for $(a,b)=(k,d-k)$, $d\geq 2k$, the curve $C$ is the projection of a degree $d$ curve $D\subset \mathbb{P}^{k+1}$ which is contained in a rational normal scroll (and we have the Ascenzi case if the rational scroll is a cone and $D$ passes through its vertex with multiplicity $d-k$, otherwis3e $D$ is smooth).
 
Hence we relate the splitting type of the plane rational curve with the existence of a rational curve on a scroll  in a space of dimension ``smaller than expected''.
\\
The main idea in order to construct such a scroll is to define it from the ``moving line" (in the language of \cite{refCSC}) defined  by a syzygy of minimum degree of $C$.

\bigskip

{The paper is organized as follows. In Section \ref{Preliminaries} we give a first result which connects the splitting type of a plane rational curve with its being a projection of a curve on a rational normal scroll. Moreover, for curves of splitting type $(3,d-3)$, we give an explicit parameterization of a curve on a rational normal scroll in $\mathbb{P}^4$ which projects to our plane curve $C$.
In Section \ref{MainThm} we prove our main result, i.e. Theorem \ref{ex-conjecture}.} We end the paper with Section \ref{OpenPrb} where we list a few open problems related to these questions.

\bigskip
\section{Preliminary results}\label{Preliminaries}

We want to study rational plane curves $C$ given via linear systems $\langle f_0,f_1,f_2\rangle \subset
K[s,t]_d$, i.e. $g^2_d$'s on $\PP ^1$ which give a projective morphism, generically one to one,  
 $f : \  \PP ^1 \rightarrow \PP ^2$, whose image is  $C$. We are interested in studying the splitting type of $C$, i.e. the pair $(k,d-k)$ such that the minimal graded free resolution of $J=(f_0,f_1,f_2)$, as a graded $S$-module, $S=K[s,t]$, is: 
\begin{equation}\label{eqn1}
0 \to  S(-k-d)\oplus S(k-2d)
\xrightarrow{\hbox{\tiny $A=\left(
\begin{matrix}
\alpha_0 & \alpha_1 & \alpha_2    \\
\beta _0 & \beta _1 & \beta _2  
\end{matrix}
\right)$}}
S(-d)^{ \oplus3}
\xrightarrow{\hbox{\tiny $(f_0\ f_1\ f_2)$}}
J\to 0,
\end{equation}
where $\deg \alpha_j=k$, $\deg \beta_j=d-k$, and the $f_i$'s are the $2\times 2$ minors of $A$, the Hilbert-Burch matrix of $J$.  For this reason we will speak of  ``splitting type" not only for parameterized rational curves in $\mathbb{P}^2$ but more generally for finite maps $\psi: \mathbb{P}^1 \rightarrow \mathbb{P}^2$, maybe not generically one to one. More precisely we will say that $\psi$ has splitting type $(h,k-h)$ if the ideal $(\psi_0,\psi_1,\psi_2)\subset K[s,t]$ has minimal free resolution:
$$0 \to  S(-h-k)\oplus S(h-2k)
\rightarrow
S(-k)^{ \oplus3}
\xrightarrow{\hbox{\tiny $(\psi_0\ \psi_1\  \psi_2)$}}
J\to 0.
$$

\medskip
Our aim is to relate the splitting $(k,d-k)$ to a geometric property of $C$, namely to how $C$ can be obtained as a projection of a rational curve $D\subset \PP^{k+1}$ which is contained in a rational normal surface $S\subset \PP^{k+1}$ (for such surfaces see e.g. \cite{segre1884sulle, MR927946}).

\medskip
{ The following result describes how curves on rational normal surfaces project to plane curves whose splitting type is related to the scroll itself.}

 \bigskip
\begin{prop}\label{Projection} Let $D\subset \PP^{k+1}$ be a smooth  rational curve of degree $d\geq 2k$ and assume that $D$ is contained in a rational normal surface $S\subset \PP^{k+1}$ and it is a unisecant for the fibers of $S$. Consider points  $P_1,\ldots P_{k-1}\in \PP^{k+1}-S$, such that by projecting $D$ in $\PP^2$ from them, the projection is generically 1:1 onto its image $C$. Then the splitting type of $C$ is $(k',d-k')$, with $k'\leq k$.
\end{prop}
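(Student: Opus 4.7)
My plan is to exhibit a nontrivial syzygy of $(f_0,f_1,f_2)$ whose three entries are forms of degree $k$; by the shape of the minimal resolution \eqref{eqn1}, the existence of any nonzero syzygy of degree at most $k$ forces the smaller index $k'$ of the splitting type to satisfy $k'\le k$. Geometrically, I expect this moving line of degree $k$ through $C$ to come from the family of projected rulings $\pi(L(s,t))$ of the scroll $S$: each ruling $L(s,t)$ passes through $D(s,t)$ by the unisecant hypothesis, so $\pi(L(s,t))$ is a line in $\PP^2$ through $C(s,t)$ for generic $(s,t)$, and the expected degree of this family of lines in $(\PP^2)^{\vee}$ is $k=\deg S$.

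To set this up I would write $S=S(a,b)$ with $a\le b$ and $a+b=k$, and parameterize the rulings as the lines joining the two directrix curves, whose parameterizations $v_a(s,t)$ and $v_b(s,t)$ have respective degrees $a$ and $b$. Since $D$ is a unisecant of degree $d$, it admits a parameterization
\[
D(s,t)=\bigl(p(s,t)\,v_a(s,t),\ q(s,t)\,v_b(s,t)\bigr),\qquad \deg p=d-a,\quad \deg q=d-b.
\]
If $\pi=[\ell_0:\ell_1:\ell_2]$ for linear forms $\ell_i$ on $\PP^{k+1}$ vanishing on $\Lambda=\langle P_1,\dots,P_{k-1}\rangle$, setting $P_i:=\ell_i(v_a,0)\in K[s,t]_a$ and $Q_i:=\ell_i(0,v_b)\in K[s,t]_b$ yields $f_i=p\,P_i+q\,Q_i$. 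I would then consider the three $2\times 2$ minors $m_i:=P_{i+1}Q_{i+2}-P_{i+2}Q_{i+1}$ (indices mod $3$) of the matrix with rows $(P_0,P_1,P_2)$ and $(Q_0,Q_1,Q_2)$, which are forms of degree $a+b=k$; Laplace expansion gives $\sum_i m_iP_i=\sum_i m_iQ_i=0$, whence
\[
\sum_{i=0}^{2} m_i f_i \;=\; p\sum_i m_iP_i+q\sum_i m_iQ_i \;=\;0,
\]
making $(m_0,m_1,m_2)$ a candidate syzygy of degree $k$.

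The delicate step will be to check that $(m_0,m_1,m_2)\not\equiv 0$. If this triple vanished, the rows $(P_i)$ and $(Q_i)$ would be $K(s,t)$-proportional, say $P_i=\lambda Q_i$, and so $f_i=(p\lambda+q)Q_i$; the rational maps $[f_0:f_1:f_2]$ and $[Q_0:Q_1:Q_2]$ from $\PP^1$ to $\PP^2$ would then have to coincide. Since $\gcd(f_0,f_1,f_2)=1$, the first map has degree $d$, whereas the second has degree at most $\deg Q_i=b$, and the hypothesis $d\ge 2k\ge 2b$ (with $b\ge 1$) yields the contradiction. The cone case $a=0$, should it arise, is handled by the same scheme: $(P_0,P_1,P_2)$ is then a constant triple, and $K(s,t)$-proportionality would force $\pi$ to contract the nondegenerate curve $C_b\subset\PP^k$ to a point, impossible because $\PP^k\not\subset\Lambda$. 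Once nonzero, dividing $(m_0,m_1,m_2)$ by any common polynomial factor yields a genuine syzygy of degree $\le k$, so $k'\le k$.
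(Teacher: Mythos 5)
Your argument is correct, but it follows a genuinely different route from the paper's. You prove $k'\le k$ purely algebraically, by exhibiting an explicit moving line of degree $k$: writing $S=S(a,b)$, $a+b=k$, putting the unisecant $D$ in the standard form $p\,v_a+q\,v_b$ (a standard fact you invoke without proof, at the same level of granted detail as the paper's later remark that unisecants correspond to syzygies), and taking the $2\times2$ minors of the $2\times3$ matrix of blocks $\ell_i(v_a,0)$, $\ell_i(0,v_b)$; the rank-one analysis ruling out the identically zero case is sound (the degree comparison $d\ge 2k> b$ does the work, and your separate treatment of the cone case $a=0$ is not even needed, since the same comparison applies there). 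Since any nonzero syzygy has degree at least the minimal generator degree $k'$, this gives $k'\le k$. The paper instead argues with vector bundles: the ruling defines a line subbundle $\mathcal{L}_S\subset T_{\PP^{k+1}}\vert_D$, Ascenzi's Theorem 1.4 together with the Gauss map to the Grassmannian (whose image has degree $k=\deg S$) gives $\deg\mathcal{L}_S=2d-k$, and since the projection contracts no ruling line, $\sO_{\PP^1}(2d-k)$ maps into $f^*(T_{\PP^2})$, forcing $f^*(\Omega_{\PP^2}(1))\cong\sO_{\PP^1}(-k+\delta)\oplus\sO_{\PP^1}(-d+k-\delta)$ with $0\le\delta\le k-1$. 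Your approach is more elementary and explicit, avoids Ascenzi's bundle theorem and the Grassmannian degree computation, and is closer in spirit to the moving-line/syzygy machinery the paper only deploys later for the main theorem; the paper's approach needs no normal form for $S$ or for the unisecant and exhibits the sub-line-bundle responsible for the splitting, which is the viewpoint reused elsewhere in the paper. One caveat, shared equally by the paper's own proof: both arguments implicitly assume the projected parameterization $(f_0,f_1,f_2)$ still has degree $d$ (no common factor), i.e.\ that the center of projection misses $D$; you should flag this hypothesis, since the points $P_1,\dots,P_{k-1}$ lying off $S$ does not by itself guarantee that their span misses $D$ (though the conclusion $k'\le k$ survives after removing the common factor).
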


\begin{proof}
The fibers of $S$ define a line bundle $\mathcal{L}_S\subset T_{\PP^{k+1}}\vert _D$; by  \cite[Theorem 1.4]{refAs1},  we have that 
$$
\deg \mathcal{L}_S\ =\ \deg( g_S^*(\omega)) + 2d,
$$
where $\omega$ is  the universal 2-vector bundle on the grassmannian $G = \mathbb{G}(1,k+1)$ and the Gauss map  $g_S: D\rightarrow \mathbb{G}(1,k+1)$ is defined by sending each $P\in D$ to the point of $G$ corresponding to the line of $S$ passing through $P$. 

We get that the image of $g_S$ in $\mathbb{G}(1,k+1)$
is a rational curve $\Gamma$ of degree $k$, in fact $S$ is a rational normal scroll in $\mathbb{P}^{k+1}$, hence $\deg(S)=k$, and also $\deg(\Gamma)=k$ (e.g. see \cite[Section 3]{refCo}).

 Now, since $\deg(\Gamma)=k$ and $g_S$ is an isomorphism between $\mathbb{P}^1$ and its image and $\omega|_{\Gamma}\simeq \mathcal{O}_{\mathbb{P}^1}(-k)$, we have that  $\deg g_S^*(\omega) = -k$ (it is the back image of $\mathcal{O}_G(-1)\vert _\Gamma$). Hence $\mathcal{L}_S$ is a subbundle of $T_{\PP^{k+1}}\vert _D$ of degree $(2d-k)$. 

We get $C$ by a projection $\nu :\PP^{k+1}\rightarrow \PP^2$ from points outside $S$, hence none of the lines belonging to the scroll $S$ contracts via $\nu$ to a point in $\PP^2$. Let $\psi : \PP^1 \rightarrow D\subset \PP^{k+1}$ be a parameterization of $D$ such that $\nu \circ \psi = f$.  The image of the bundle $\mathcal{L}_S$  through the map of vector bundles $\psi^*(T_{\PP^{k+1}}) \to f^*(T_{\PP^2})$ is the line bundle $\mathcal{O}_{\PP^1}(2d-k)$.

Hence we get that  $f^*(T_{\PP^2}) \simeq \mathcal{O}_{\PP^1}(2d-k+\delta)\oplus\mathcal{O}_{\PP^1}(k+d-\delta)$, for some $\delta$,\, $k-1\geq\delta \geq 0$.  Now, dualizing and twisting by $1$
$$
f^*(\Omega_{\PP^2}(1) )\cong \mathcal{O}_{\PP^1}(-k+\delta)\oplus \mathcal{O}_{\PP^1}(-d+k-\delta),$$
i.e. the splitting type of $C$ is $(k',d-k')$, with $k'=k-\delta\leq k$.
\end{proof} 

\bigskip
In \cite{refAs1}, Ascenzi showed that when the splitting type is $(2,d-2)$, $C$ can be viewed as the projection of a curve $D\subset \PP^3$ contained in a quadric surface $S$ from a point outside $S$. The surface   $S$ and the curve $D$ are smooth except when $C$ is Ascenzi, i.e. it possesses a point of multiplicity $d-2$; in that case $S$ is a quadric cone and $D$ has multiplicity $d-2$ at the vertex of $S$ (its only  singularity).

\medskip
 Ascenzi gives a direct construction of the quadric $S$ and of the parameterization of $D$ in $\PP^3$, via  the Hilbert-Burch matrix of $(f_0,f_1,f_2)$. We want to generalize this point of view to any possible splitting type for a reduced, irreducible plane rational curve. 
 
 For curves with splitting type $(3,d-3)$, we are able to strictly follow Ascenzi's idea, in order to construct $S$ and $D$ as in the following
 
 \bigskip
 \begin{prop} Let $C\subset \PP^2$ be a rational curve of degree $d\geq 6$, with splitting type $(3,d-3)$; then there is a rational curve $D$ and a rational normal surface $S$, with $D\subset S \subset \PP^4$, such that $C$ is the projection of $D$ from two points not on $S$. Moreover: 
 \begin{itemize}
 \item if $C$ is not Ascenzi, then $D$ and $S$ are smooth;
 
 \item if $C$ is Ascenzi, then $S$ is a cone, $D$ passes through the vertex of $S$ with multiplicity $(d-3)$ and that one is its only singular point.   
 \end{itemize} 
 \end{prop}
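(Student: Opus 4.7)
The plan is to mimic Ascenzi's construction for splitting type $(2,d-2)$ by using the Hilbert-Burch matrix of $J = (f_0, f_1, f_2)$ to explicitly produce $D$ and $S$. The whole argument pivots on expressing the low-degree syzygy $(\alpha_0, \alpha_1, \alpha_2)$ — a triple of cubics in $K[s,t]$ — as the $2 \times 2$ minors of a $2 \times 3$ matrix whose rows have degrees $(1, 2)$; this row-degree structure is what matches the natural parametrization of the smooth cubic scroll $S(1, 2) \subset \PP^4$. The dichotomy non-Ascenzi/Ascenzi should correspond exactly to the $\alpha_i$ being linearly independent, respectively dependent, in $K[s,t]_3$.

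In the non-Ascenzi case, I first argue that the $\alpha_i$ are linearly independent: otherwise a basis change on $f_0, f_1, f_2$ could put the syzygy in the form $(\alpha_0, \alpha_1, 0)$, and using $\gcd(\alpha_0,\alpha_1,\alpha_2) = 1$ this would produce a factorization $f_0 = -\alpha_1 u$, $f_1 = \alpha_0 u$ with $\deg u = d - 3$, giving $C$ a point of multiplicity $d - 3$ and contradicting the hypothesis. Linear independence forces $\mu = 1$ for the ideal $(\alpha_0, \alpha_1, \alpha_2)$ (since $\mu \le \lfloor 3/2 \rfloor = 1$ and $\mu = 0$ encodes a constant linear dependence), and Hilbert-Burch then yields linear forms $a_0, a_1, a_2$ and quadratic forms $b_0, b_1, b_2$ such that $\alpha_i = a_{i+1} b_{i+2} - a_{i+2} b_{i+1}$ (indices mod $3$). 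Setting $B := \sum_j a_j \beta_j$ and $C := \sum_j b_j \beta_j$, of degrees $d-2$ and $d-1$ respectively, a short manipulation starting from $f_i = \alpha_{i+1} \beta_{i+2} - \alpha_{i+2} \beta_{i+1}$ and using $\sum_j \beta_j f_j = 0$ yields the key identity
\[
f_i \;=\; b_i B - a_i C, \qquad i = 0, 1, 2.
\]
This suggests the lift $\Phi \colon [s:t] \mapsto [Cs : Ct : Bs^2 : Bst : Bt^2] \in \PP^4$, all components of degree $d$, whose image $D$ lies on $S(1, 2)$ by construction. The linear projection $\pi \colon \PP^4 \dashrightarrow \PP^2$ whose $3 \times 5$ matrix has rows given by the coefficient vectors of $(-a_i, b_i)$ then sends $\Phi$ to $(f_0, f_1, f_2)$ directly from this identity. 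The center $\ker \pi$ misses the scroll: a scroll point $(us, ut, vs^2, vst, vt^2)$ in $\ker \pi$ forces $v\, b_i(s,t) = u\, a_i(s,t)$ for every $i$, and a short case analysis on $(u,v)$ then forces $\alpha_k(s,t) = 0$ for all $k$, contradicting $\gcd(\alpha_i) = 1$. Smoothness of $D$ follows because $\gcd(B, C) = 1$ (otherwise a common factor could be cancelled in $\Phi$ and $D$ would have degree $<d$, contradicting that $\pi|_D$ is birational onto the degree-$d$ curve $C$), so $\Phi$ embeds $\PP^1$ into $S(1,2)$.

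In the Ascenzi case, the dichotomy above gives $\alpha_2 = 0$ and $(\alpha_0, \alpha_1) = (\tilde f_1, -\tilde f_0)$ up to scalar, with $f_i = u \tilde f_i$ for $i = 0, 1$, $\deg u = d - 3$, $\deg \tilde f_i = 3$, and $\gcd(\tilde f_0, \tilde f_1) = 1$. The $(1,2)$-factorization breaks down, so I replace the lift by
\[
\Phi \colon [s:t] \mapsto [\,f_2 \,:\, u s^3 \,:\, u s^2 t \,:\, u s t^2 \,:\, u t^3\,],
\]
which parametrizes a degree-$d$ curve $D$ on the cone $S(0, 3) \subset \PP^4$ over the twisted cubic in $\{x_0 = 0\}$, with vertex $V = (1:0:0:0:0)$. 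Since $u$ vanishes at $d - 3$ points of $\PP^1$, $\Phi$ collapses those to $V$, giving $D$ multiplicity $d - 3$ at $V$ and smoothness elsewhere. The projection whose first row is $(1, 0, 0, 0, 0)$ and whose remaining two rows are the coefficient vectors of $\tilde f_0, \tilde f_1$ (in the basis $s^3, s^2 t, s t^2, t^3$) sends $\Phi$ to $(u\tilde f_0, u\tilde f_1, f_2) = (f_0, f_1, f_2)$; its center line lies in $\{x_0 = 0\}$ and misses the twisted cubic because $\gcd(\tilde f_0, \tilde f_1) = 1$ — hence the whole cone, since $V \notin \{x_0 = 0\}$. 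The main technical obstacle is the dichotomy itself: establishing rigorously that linear independence of $(\alpha_0, \alpha_1, \alpha_2)$ in $K[s, t]_3$ is equivalent to $C$ being non-Ascenzi. Once this is in place, both constructions reduce to explicit computational verifications.
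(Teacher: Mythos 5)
Your construction is correct, and it proves the proposition by a genuinely different route than the paper's. The paper normalizes the degree-$3$ syzygy by analyzing how the plane $H=\langle \alpha_0,\alpha_1,\alpha_2\rangle$ meets the twisted cubic in $\PP(K[s,t]_3)$ (with separate subcases according to whether $H$ meets it in $3$, $2$ or $1$ points), builds the lift $(s\phi_0,-t\phi_0,-s\phi_1,t\phi_1,\phi)$ by hand, and then identifies the smooth scroll through $D$ by a computer-algebra check of three explicit quadrics; you instead apply Hilbert--Burch (equivalently, a $\mu$-basis) to $(\alpha_0,\alpha_1,\alpha_2)$ itself, obtaining rows $(a_i)$, $(b_i)$ of degrees $1$ and $2$, the identity $f_i=b_iB-a_iC$, and a lift landing on the standard scroll $S(1,2)$ (resp.\ the cone $S(0,3)$) by construction. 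The key identity does check out: with your $B,C$ one has $b_iB-a_iC=\sum_j(b_ia_j-a_ib_j)\beta_j=\alpha_{i+1}\beta_{i+2}-\alpha_{i+2}\beta_{i+1}=f_i$ (the relation $\sum_j\beta_jf_j=0$ is not even needed), and your arguments that the center of projection misses the scroll, resp.\ the cone, are sound. What your route buys: it is coordinate-free, avoids both the case analysis on $H\cap C_3$ and the CoCoA verification, and it is visibly the $k=3$ instance of the strategy the paper uses later for Theorem \ref{ex-conjecture}, where the syzygies $(\gamma_i),(\delta_i)$ of $(\alpha_0,\alpha_1,\alpha_2)$ of degrees $h$ and $k-h$ play exactly the role of your $(a_i),(b_i)$. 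What the paper's route buys: completely explicit equations for $D$ and $S$, and the identification of the class $D\sim C_0+(d-1)f$ on $S$.

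Two points to tighten. First, for the smoothness of $D$ the operative fact is not merely $\gcd(B,C)=1$ (which only guarantees that $\Phi$ is a morphism of degree $d$): what makes $\Phi$ an isomorphism onto its image is that its composition with the ruling projection of $S(1,2)$ is the identity of $\PP^1$, i.e.\ $D$ is a unisecant section of the ruling; this is the same unisecancy argument the paper uses, and you should state it, since base-point-freeness alone does not give an embedding. Second, the dichotomy you flag is discharged exactly as the paper implicitly does: your factorization argument gives ``non-Ascenzi $\Rightarrow$ the $\alpha_i$ are independent''; for the converse, the bounds (\ref{Ascenzibnds}) force an Ascenzi curve of splitting type $(3,d-3)$ to have a point of multiplicity $d-3$ for $d\geq 8$, and then Ascenzi's Lemma 1.3 produces a length-$2$ syzygy of degree $3$, necessarily proportional to $(\alpha_0,\alpha_1,\alpha_2)$ because the minimal syzygy is unique when $2\cdot 3<d$. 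For $d=6,7$ small extra remarks are needed (for $d=6$ one chooses the length-$2$ element in the two-dimensional space of degree-$3$ syzygies; for $d=7$ an Ascenzi curve may have only a triple point, in which case the smooth construction applies) --- but the paper's own proof glosses over exactly the same edge cases, so this is not a defect of your approach relative to the paper.
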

 
 \begin{proof}
If the splitting type of $C$ is $(3,d-3)$, from (\ref{eqn1}) we get that we can view $C$ as parameterized by the maximal minors of  
\begin{equation}\label{syzygies}
A\ =\ \left(\begin{matrix}
\alpha_0 & \alpha_1 & \alpha_2    \\
\beta _0 & \beta _1 & \beta _2  
\end{matrix}\right)
\end{equation}
where $\deg \alpha_j=3$ and $\deg \beta_j=d-3$. Now suppose $C$ is not Ascenzi, i.e. it does not possess a point of multiplicity $d-3$. This implies that its degree 3 syzygy has lenght 3, i.e. the $\alpha_i$'s are linerly independent (see \cite{refAs1}), and, without loss of generality, we can suppose that 
$$
\alpha_0\ =\ s^3,\quad \alpha_1\ =\ s^2t+st^2,\quad    \alpha_2\ =\ t^3
$$
In fact, if $C$ has no points of multiplicity $d-3$, then $\alpha_0, \alpha_1, \alpha_2 $  span a plane $H$ in the space $\mathbb{P}^3$ parameterizing forms of degree 3 in $K[s,t]$. Let $H$ meet in three distinct points the rational normal curve $C_3$ (whose points correspond to forms which are powers of linear forms); via a linear change of coordinates in $\PP^1$, which leaves $C$ unchanged, we can suppose that those three points correspond to $s^3, (s+t)^3$ and $t^3$.  
Since what we need is to keep the same $g^2_d$ defining $C$,  we can change the parametrization $(f_0,\ f_1,\ f_2)$ (i.e. the generators we pick for the linear space $\langle f_0,\ f_1,\ f_2\rangle$) and we can work modulo projectivities  on $\langle f_0,\ f_1,\ f_2\rangle $; in particular if we consider (from Equation \ref{eqn1}):
$$
\left(\begin{matrix}
\alpha_0 & \alpha_1 & \alpha_2    \\
\beta _0 & \beta _1 & \beta _2  
\end{matrix}\right) \cdot \left(\begin{matrix}
f_0   \\
f_1 \\ f_2   
\end{matrix}\right)\ =\ \left(\begin{matrix}
0 \\
0 
\end{matrix}\right),
$$
we can change to:
$$
\left(\begin{matrix}
\alpha_0 & \alpha_1 & \alpha_2    \\
\beta _0 & \beta _1 & \beta _2  
\end{matrix}\right) \cdot P\cdot P^{-1}\cdot \left(\begin{matrix}
f_0   \\
f_1 \\ f_2   
\end{matrix}\right)\ =\ \left(\begin{matrix}
0 \\
0 
\end{matrix}\right),
$$
where $P$ is a $3\times 3$ invertible matrix, and $P^{-1}$ will give a projectivity in the plane  $\langle f_0,\ f_1,\ f_2\rangle$, so we can consider the curve given by parameterization: 
$$P^{-1} \cdot \left(\begin{matrix}
f_0   \\
f_1 \\ f_2   
\end{matrix}\right),\quad {\rm which\ has\ syzygy\ matrix:}\quad \left(\begin{matrix}
\alpha_0 & \alpha_1 & \alpha_2    \\
\beta _0 & \beta _1 & \beta _2  
\end{matrix}\right) \cdot P.$$
 Hence, modulo two projectivities, one in $H$ and one in $\langle f_0,f_1,f_2 \rangle$, given by appropriate $P$ and $P^{-1}$, we can suppose $
\alpha_0\ =\ s^3;\ \alpha_1\ =\ s^2t+st^2;\  \alpha_2\ =\ t^3
$ (since $s^2t+st^2= 1/3[(s+t)^3-s^3-t^3]$).  

\medskip
Notice that if $H$ were to meet $C_3$ only in two points, say $s^3$ and $t^3$, this would imply that the tangent line to $C_3$ at one of them, say $s^3$, is contained in $H$; in this case we can choose $\alpha_1$ on the tangent line of the rational normal  cubic curve at $s^3$, hence we can use: $\alpha_0\ =\ s^3;\ \alpha_1\ =\ s^2t;\  \alpha_2\ =\ t^3$, and all what follows could be done with the same procedure (with even easier computations). 

On the other hand, it is not possible for $H$ to meet $C_3$ at only one point, since in this case $H$ would be the osculating plane of $C_3$ at this point, say $s^3$, and all the forms parameterized by $H$ would have $s$ as a common factor, hence $(\alpha_0,\alpha_1,alpha_2)$ could not be a primitive syzygy.

\bigskip
Now set 
$$\left\{ \begin{array}{l}
\phi_0=s^2\beta_1-\beta_0(st+t^2)\\
\phi_1=\beta_2(s^2+st)-\beta_1t^2\\
\phi=\beta_1s^2t-\beta_0st^2+\beta_2s^2t-\beta_1st^2
\end{array}\right. .$$

Let us consider our parameterization of $C$ which can be written as: 
$$
\left\lbrace \begin{array}{l}f_0\ =\ \alpha_1\beta_2-\alpha_2\beta_1 = s[s^2\beta_1-\beta_0(st+t^2)] = s\phi_0 \\  f_1\ =\ \alpha_2\beta_0-\alpha_0\beta_2 = \beta_0t^3-\beta_2s^3\\ 
f_2\ =\ \alpha_0\beta_1-\alpha_1\beta_0 =t[\beta_2(s^2+st)-\beta_1t^2] = t\phi_1\end{array}\right. .
$$
Since 
$$
t\phi_0+s\phi_1= \beta_1s^2t-\beta_0st^2+\beta_2s^2t-\beta_1st^2-(\beta_0t^3-\beta_2s^3)=\phi - f_1,
$$
we get:
$$
\left\lbrace \begin{array}{l}f_0\ =\  s\phi_0 \\  f_1\ =\ \phi-(t\phi_0+s\phi_1) \\ 
f_2\ =\  t\phi_1\end{array}\right.  .
$$
 
\medskip
Now we can consider the curve $D\subset \PP^4$ given by the parameterization:
$$
\left\lbrace \begin{array}{l}x_0\ =\  s\phi_0 \\  x_1\ =\ -t\phi_0 \\ 
x_2\ =\ -s\phi_1 \\ x_3\ =\ t\phi_1\\ x_4\ =\ \phi\end{array}\right. .
$$
 We get that $C$ can be viewed as the projection of $D$ on the plane $H = \{x_2=x_4=0\}$ from the points $P_1=(0,1,-1,0,0)$ and $P_2=(0,1,0,0,-1)$.

\medskip
We want to show that $D$ is smooth. It not hard (e.g. by using CoCoA \cite{CoCoA-5}) to see that $D$ is contained in three quadric hypersurfaces $\{ x_0x_3 -  x_1x_2 ,
 x_2x_3 - x_2x_4 + x_3x_4+x_0x_3, 
  x_0x_1  + x_0x_4 - x_1x_4-x_1x_3 \}$, which generate the ideal of a smooth rational normal surface $S$ such that $P_1,P_2\notin S$.
\medskip
The structure of $S$ is well known; Pic$(S)$ is generated by $C_0,f$, where $f$ is a fiber and $C_0$ is a  curve with $C_0^2=-e=-1$, $C_0\cdot f=1$ and $f^2=0$. 

It is not hard to check that the lines of the ruling of $S$ are contained in the planes of the ruling of the quadric cone $\mathcal{Q} = \{ x_0x_3 -  x_1x_2=0 \}$, and such planes are unisecant (outside its vertex) to $D$. Hence $D$ has to be of type $(C_0+yf)$ on $S$ and so it is smooth.

We also have that the hyperplane section of $S$ is $C_0+2f$, and, since $D$ has degree $d$, we have:
$$
D\cdot H = -1+y+2= 1+y=d 
$$
which yields $D=C_0+(d-1)f$. 

Now we sketch the construction in the Ascenzi case (similar to what we just did). If $C$ has a point of multiplicity $d-3$, we can suppose 
$$
\alpha_0=s^2(as+bt); \ \alpha_1=t^2(cs+dt), \ \alpha_2=0,$$ 
for some $a,b,c,d\in K$,  with $ a\neq 0,\ d\neq 0$ (since $\alpha_0,\alpha_1$ do not have common factors).

Working as in the previous case, we can construct the curve $D\in \PP^4$ given by the parameterization:
$$ 
\left\lbrace \begin{array}{l}x_0\ =\  d\beta_2 t^3
 \\  x_1\ =\ a\beta_2 s^3 \\ 
x_2\ =\ (a'+b)\beta_2s^2t
 \\ x_3\ =\  (c+d')\beta_2st^2 \\
  x_4\ =\phi\end{array}\right. 
$$
where $\phi=f_2-(a'\beta_2s^2t+d'\beta_2st^2)$, $a'=a$ if $b=0$ and $a'=0$ if $b\neq 0$,  $d'=d$ if $c=0$ and $d'=0$ if $c\neq 0$.
It is easy to check that the curve $D$ lies on a rational normal scroll $S$
which is a cone, with vertex $V=(1,0,0,0,0)$ and $D$ has a $(d-3)$-ple point at $V$, given by the $d-3$ zeroes of $\beta_2$. {Hence also in this case} $C$ is a projection of $D$ as requested, and we are done.
\end{proof}
 
 \bigskip
This way to directly construct the curve $D$ on a scroll is not so feasible when $d\geq 4$; we will follow another attack in order to generalize this idea in the next section.

 \bigskip
 \section{The main theorem}\label{MainThm}
 
In this section we state and prove our main result:

\begin{thm}\label{ex-conjecture} Let $C\subset \mathbb{P}^2$ be a rational curve of degree $d$. The splitting type of $C$ is $(k,d-k)$, $2\leq 2k\leq d$, if and only if there is a degree $d$ rational curve, $\mathcal{D}\subset \mathbb{P}^{k+1}$  which is contained in a rational normal surface (scroll) $S_{h,k-h}\subseteq \PP^{k+1}$, with $2h\leq k$, the curve $C$ is the projection of $\mathcal{D}$ from $(k-1)$ points outside $S_{h,k-h}$ and either 

i) $h>0$, $S_{h,k-h}$ and $\mathcal{D}$ are smooth (and $C$ is not Ascenzi), or 

ii)  $S_{0,k}$ is a cone , $\mathcal{D}$ passes through the vertex $V$ of the cone with multiplicity $d-k$ and $V$ is its only singular point {(this covers also the case $S_{0,1}=\PP^2$, for which $C=\mathcal{D}$). In this case $C$ is Ascenzi.} 

Moreover $\mathcal{D}$ is uniscant to the fibers of $S_{h,k-h}$ and $h$ is the degree of a syzygy of minimal degree for a syzygy of minimal degree $k$ of $C$. 
\end{thm}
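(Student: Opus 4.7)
The plan is to exploit the fact that the minimal syzygy $(\alpha_0,\alpha_1,\alpha_2)$ of degree $k$ supplied by (\ref{eqn1}) has its own Hilbert--Burch resolution; this second application of Hilbert--Burch produces both the scroll and the lifted curve. Assume $C$ has splitting type $(k,d-k)$ and is not Ascenzi, so the $\alpha_i$ are linearly independent primitive forms of degree $k$. Hilbert--Burch then furnishes a $2\times 3$ matrix
\[N=\begin{pmatrix}A_0&A_1&A_2\\B_0&B_1&B_2\end{pmatrix},\qquad \deg A_i=h,\ \deg B_i=k-h,\ 2h\le k,\]
whose maximal minors recover the $\alpha_i$, with $h$ automatically the minimum-degree syzygy of $(\alpha_0,\alpha_1,\alpha_2)$ (yielding the ``moreover'' clause). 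Since $\sum\alpha_if_i=0$, the vector $(f_0,f_1,f_2)$ sits in the syzygy module of $(\alpha_0,\alpha_1,\alpha_2)$, which is freely generated by the rows of $N$, so there exist unique $u,v\in K[s,t]$ of degrees $d-h$ and $d-k+h$ with $f_i=A_iu+B_iv$; necessarily $\gcd(u,v)=1$ since $\gcd(f_0,f_1,f_2)=1$.

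Using this decomposition I would set
\[\mathcal{D}:(s,t)\longmapsto\bigl(s^{h-j}t^j u\bigr)_{j=0}^{h}\cup\bigl(s^{k-h-j}t^j v\bigr)_{j=0}^{k-h}\in\PP^{k+1},\]
which factors through a section of the scroll $S_{h,k-h}\to\PP^1$, hence is a unisecant curve of degree $d$. The projection $\pi:\PP^{k+1}\to\PP^2$ whose three coordinate functionals are the coefficient vectors of the $A_i$ and $B_i$ sends $\mathcal{D}$ onto $C$, and its center $L\cong\PP^{k-2}$, spanned by $k-1$ points, lies off $S$ by the dimension count $\dim L+\dim S<k+1$ together with generic finiteness of $\pi|_S$. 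When $h>0$, the scroll is smooth and $\gcd(u,v)=1$ makes the section map an immersion, so $\mathcal{D}$ is smooth: this is case (i). When $h=0$, $S$ is the cone with vertex $V=(1:0:\cdots:0)$, and $\mathcal{D}$ passes through $V$ at the $d-k$ zeros of $v$ with multiplicity exactly $d-k$, smooth elsewhere: this is case (ii), and the projected curve inherits a $(d-k)$-fold point, i.e.\ is Ascenzi.

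For the converse direction, in case (i) Proposition~\ref{Projection} gives splitting $(k',d-k')$ with $k'\le k$, and the $2\times 2$ minors of $N$ built from the projection data produce an explicit primitive degree-$k$ syzygy of $(f_0,f_1,f_2)$; a Hilbert--Burch degree count then forces $k'=k$. In case (ii), the $(d-k)$-fold point of $C$ inherited from $V$ together with the Ascenzi bound (\ref{Ascenzibnds}) directly yields splitting $(k,d-k)$.

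The hardest step will be the clean treatment of the Ascenzi/cone case: verifying that $h=0$ is forced precisely when the $\alpha_i$ fail to span a $3$-dimensional subspace of $K[s,t]_k$ (so that the ``moving line'' collapses to a moving point), checking that the constructed $\mathcal{D}$ has its unique singularity at $V$ of multiplicity exactly $d-k$, and arranging the $k-1$ projection points outside $S$ (delicate near the cone vertex). A parallel subtlety in the $(\Leftarrow)$ step is establishing coprimality of the minors of $N$, which is what makes the Hilbert--Burch degree comparison sharp.
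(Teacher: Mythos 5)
Your construction is essentially correct and rests on the same key idea as the paper's proof, namely passing to the second syzygy level: applying Hilbert--Burch to the minimal degree-$k$ syzygy $(\alpha_0,\alpha_1,\alpha_2)$ and using its two generating syzygies of degrees $h$ and $k-h$ to build the scroll. The packaging, however, is genuinely different. The paper works with the incidence scroll $\{\alpha_0x_0+\alpha_1x_1+\alpha_2x_2=0\}\subset\PP^2\times\PP^1$, identifies it with the Hirzebruch surface $\mathbb{F}_{k-2h}$ by divisor-class arguments (the graph of the degree-$h$ syzygy is $C_0$, the graph $D$ of $f$ lies in $|C_0+(d-h)f|$), embeds it into $\PP^{k+1}$ by the system $|C_0+(k-h)f|$, and checks the projection using the three Koszul syzygies. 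You instead use freeness of the syzygy module of $(\alpha_i)$ to write $f_i=A_iu+B_iv$ and exhibit $\mathcal{D}$, the scroll and the projection in explicit coordinates; your decomposition is exactly the paper's relation $D\sim C_0+(d-h)f$ made concrete, with $u,v$ playing the role of the coefficients $\lambda,\mu$ in the paper's lemma. Your converse is in fact more direct than the paper's: the $2\times2$ minors of $N$ give a degree-$k$ syzygy of $(f_0,f_1,f_2)$, and since $k\le d/2$, a syzygy of that degree which is not a multiple of a lower-degree one must be a minimal generator, forcing equality of the splitting; the paper instead invokes Proposition \ref{Projection} and a final uniqueness remark.

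The one step whose justification would fail as written is the disjointness of the projection center $L$ from $S$: a dimension count $\dim L+\dim S<k+1$ proves nothing for this specific, non-generic $L$, and ``generic finiteness of $\pi|_S$'' is circular. Your own set-up supplies the correct argument: a point of $S\cap L$ lies over some $(s_0:t_0)$ with fiber coordinates $(\lambda:\mu)\neq(0,0)$ and is annihilated by the three linear forms, i.e.\ $\lambda A_i(s_0,t_0)+\mu B_i(s_0,t_0)=0$ for $i=0,1,2$; then $N(s_0,t_0)$ has rank at most $1$, so all three minors $\alpha_i$ vanish at $(s_0:t_0)$, contradicting the fact that the row $(\alpha_0,\alpha_1,\alpha_2)$ of the Hilbert--Burch matrix in (\ref{eqn1}) has no common zero. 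The same computation shows the three linear functionals are independent (a constant dependence would, after a projectivity on the $f_i$, kill a column of $N$ and with it two of the minors), so the center is a genuine $\PP^{k-2}$, i.e.\ the span of $k-1$ points. Note that this is also the fact needed to make ``primitive'' precise in your converse, and there the hypothesis to use (as in Proposition \ref{Projection}) is that the span of the $k-1$ centres misses $S$, not merely that each point does. The items you defer (that $h=0$ exactly when the $\alpha_i$ are linearly dependent, equivalently when $C$ has a point of multiplicity $d-k$, via Ascenzi's length-two-syzygy criterion, and that $\mathcal{D}$ has multiplicity exactly $d-k$ at the vertex, coming from the $d-k$ zeros of $v$) are routine and agree with the paper's treatment, so with the repair above your argument goes through.
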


\bigskip
\begin{proof}
{Notice that for $k=1,2$, the theorem is known, proved by Ascenzi in \cite{refAs1}, while for $k=3$ it can be deduced from Proposition 2.2 above.}

Let $C$ be parameterized by $(f_0,f_1,f_2)\in K[s,t]_d$, then the ideal $(g_0,g_1,g_2) = (x_2f_0-x_0f_2,-x_1f_2+x_2f_1,x_0f_1-x_1f_0)\subset K[x_0,x_1,x_2,s,t]$ defines a curve $D$ in $\PP^2\times \PP^1$, which is the graph of the map $\PP^1 \rightarrow \PP^2$ (the projection of $D$ on the $\PP^1$ factor is 1:1 and surjective while the projection on the $\PP^2$ factor is $C$). 

 Let $I_D\subset K[x_0,x_1,x_2,s,t]$ be the ideal of $D$, we have that a polynomial $A_0x_0+A_1x_1+A_2x_2\in K[x_0,x_1,x_2,s,t]_{1,n}$ vanishes on $D$ if and only if $A_0f_0+A_1f_1+A_2f_2$ is identically zero in $ K[s,t]$ (in the language of \cite{refCSC}, this is a {\it moving line of degree $n$ for $C$}), i.e. the spaces $(I_D)_{1,n}$ correspond to the syzygies of degree $n$ of the ideal $(f_0,f_1,f_2)\subset K[s,t]$.  
 
 Hence we have that the splitting type of $C$ is $(k,d-k)$, $2k\leq d$, if and only if $k$ is the first integer for which $(I_D)_{1,k}\neq \{0\}$, and we can distinguish two cases: either $2k<d$ or $2k=d$.  From what we know about the syzygies of $(f_0,f_1,f_2)$,  if $2k<d$, there is a unique (up to multiplication by constants) form $p\in (I_D)_{1,k}$, namely $p = \alpha_0x_0+\alpha_1x_1+\alpha_2x_2$; all elements of $(I_D)_{1,n}$, for $n=k+1,\ldots ,d-k-1$ are of type $\lambda p$, with $\lambda\in K[s,t]_{n-k}$, and there is an element $q\in (I_D)_{1,d-k}-(p)_{1,d-k}$ such that  $(I_D)_{1,d-k}=(p,q)_{1,d-k}$ (of course we can choose $q$ modulo elements of $(p)_{1,{d-k}}$). If $2k=d$ there are two independent forms $p, p_1\in (I_D)_{1,k}$ which generate $\bigoplus_l(I_D)_{1,l}$.

\medskip

Now let us consider the moving line of degree $k$ for $C$, given by $p = \alpha_0x_0+\alpha_1x_1+\alpha_2x_2\in K[x_0,x_1,x_2,s,t]$; $p=0$ is the equation of a rational scroll   $S\subset \PP^2\times \PP^1$: for every $(s,t)\in \PP^1$ we have a line in the corresponding $\PP^2$-fiber. 

\bigskip
 \begin{lem} The scroll $S$ and the curve $D$ are smooth.
 \end{lem}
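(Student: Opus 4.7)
The plan is to treat $D$ and $S$ separately.

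For $D$: since $C$ is the image of a morphism $f:\PP^1\to\PP^2$, the forms $f_0,f_1,f_2$ have no common zero on $\PP^1$, so the map $\iota:\PP^1\to\PP^2\times\PP^1$ given by $(s:t)\mapsto ((f_0(s,t):f_1(s,t):f_2(s,t)),(s:t))$ is everywhere defined. Since $\pi_2\circ\iota=\mathrm{id}_{\PP^1}$, $\iota$ is a closed embedding whose image is exactly $D$, hence $D\cong\PP^1$ is smooth.

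For $S$: the key algebraic input is that $\gcd(\alpha_0,\alpha_1,\alpha_2)=1$ in $K[s,t]$. Indeed, if $h\in K[s,t]$ were a nonconstant common factor, then dividing the syzygy relation $\sum_i\alpha_if_i=0$ by $h$ would produce a syzygy of degree $k-\deg h<k$, contradicting the minimality of $k$ in the resolution (\ref{eqn1}). Since $K[s,t]$ is a UFD in which every irreducible homogeneous form is linear, the absence of a common factor is equivalent to the absence of a common zero on $\PP^1$.

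With this in hand, smoothness of $S$ follows from the Jacobian criterion in affine charts of $\PP^2\times\PP^1$. At a point $P=((x_0:x_1:x_2),(s_0:t_0))\in S$, dehomogenize around $P$; up to relabeling, take $x_0\neq 0$ and $t_0\neq 0$, set $y_i=x_i/x_0$ and $u=s/t$, obtaining the local equation $\tilde p(u,y_1,y_2)=\alpha_0(u,1)+\alpha_1(u,1)\,y_1+\alpha_2(u,1)\,y_2$. The partials $\partial\tilde p/\partial y_j=\alpha_j(u,1)$ ($j=1,2$) cannot both vanish at $P$: if they did, then $\tilde p(P)=0$ would force $\alpha_0(u,1)=0$ as well, contradicting the absence of a common zero. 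The remaining coordinate choices (when $x_0=0$ or $t_0=0$) are handled by the same argument in the appropriate chart.

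No step is a serious obstacle; the content of the lemma is entirely captured by the minimality of the syzygy degree, which forces $(\alpha_0,\alpha_1,\alpha_2)$ to have empty base locus on $\PP^1$. The only mildly delicate point is the translation from "no common factor" (algebraic, from minimality) to "no common zero" (geometric, used in the Jacobian computation), which is immediate over the UFD $K[s,t]$ on the one-dimensional variety $\PP^1$.
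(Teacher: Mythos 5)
Your proof is correct. For the smoothness of $S$ you follow essentially the paper's own route: minimality of the degree-$k$ syzygy forces $\gcd(\alpha_0,\alpha_1,\alpha_2)=1$, hence (over the algebraically closed field, binary forms splitting into linear factors) the $\alpha_i$ have no common zero on $\PP^1$, and the Jacobian criterion in the affine charts of $\PP^2\times\PP^1$ then gives smoothness of the hypersurface $\{p=0\}$; you merely spell out the chart computation that the paper dismisses as ``not hard to check'', and your observation that vanishing of the two partials in the $x$-directions together with $p(P)=0$ would force the third $\alpha_i$ to vanish is exactly the right way to do it. Where you genuinely diverge is the curve $D$: the paper first proves $S$ smooth and then deduces smoothness of $D$ from its being a unisecant curve on the smooth ruled surface $S$ (i.e.\ a section of the ruling), whereas you argue directly that $D$ is the graph of the morphism $f$, i.e.\ the image of the section $\iota=(f,\mathrm{id})$ of the projection $\PP^2\times\PP^1\to\PP^1$ (well defined because $f_0,f_1,f_2$ have no common zero), hence a closed embedding of $\PP^1$, so $D\cong\PP^1$ is smooth independently of anything about $S$. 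Your route for $D$ is more elementary and self-contained; the paper's phrasing has the mild advantage of putting the unisecance of $D$ on record, which is reused in the subsequent lemma when $D$ is written as a divisor class $C_0+(d-h)f$ on $S\cong\mathbb{F}_e$. Either argument is acceptable.
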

\begin{proof} 
Let us consider the partial derivatives of $p$:
$$
\left(\dfrac{\partial p}{\partial x_0}\ ,\ \dfrac{\partial  p}{\partial x_1}\ ,\ \dfrac{\partial p}{\partial x_2}\ ;\ \dfrac{\partial  p}{\partial s}\ ,\ \dfrac{\partial  p}{\partial t}\right)\ = \ \left( \alpha_0, \alpha_1, \alpha_2; p_s, p_t\right).
$$
Since the $\alpha_i$'s cannot have common factors (they represent a minimal syzygy), we get that in the above vector the first part in the $\alpha$'s is never zero at any point of $S$.  From this it is not hard to check that we can find affine charts where the vector of local derivatives is always non-zero, hence $S$ is smooth. 

Then, since $D$ is a curve on $S$ which is unisecant for its fibers, $D$ is smooth.
  
\end{proof}

We want to show that the scroll $S_{h,k-h}$ which appears in our statement can be obtained from the scroll $S\subset \PP^2\times\PP^1$ that we have  just defined as $\{p=0\}$, as the image of $S$ under the map given by a suitable linear system on it. 

\medskip
The idea is to consider the ideal $(\alpha_0,\alpha_1,\alpha_2)\subset K[s,t]$, which in turns gives a finite map $\mathbb{P}^1\rightarrow \mathbb{P}^2$, maybe not generically one to one.
Let us consider the splitting type of $(\alpha_0,\alpha_1,\alpha_2)$, say  $(h,k-h)$, $2h\leq k$, where $h$ is the degree of the minimal syzygy $(\gamma_0,\gamma_1,\gamma_2)$; we will get as before a curve $\Gamma\in \PP^2\times\PP^1$ and an ideal $(p_1,q_1)$, where
$$
q_1 = \gamma_0x_0+\gamma_1x_1+\gamma_2x_2, \quad q_2 = \delta_0x_0+\delta_1x_1+\delta_2x_2,  
$$
and
$$\left(
\begin{matrix}
\gamma_0 & \gamma_1 & \gamma_2    \\
\delta _0 & \delta _1 & \delta _2  
\end{matrix}
\right)$$
is the syzygy matrix of $(\alpha_0,\alpha_1,\alpha_2)$. Hence the $\delta_i$'s give a primitive syzygy of degree $k-h$ for $(\alpha_0,\alpha_1,\alpha_2)$. 

\begin{lem}   When we consider $S$ as a Hirzebruch surface $S=\mathbb{F}_e$, with $C_0^2=-e$, we have  $e=k-2h$ and $C_0$ is the graph of the curve in $\PP^2$ parameterized by $(\gamma_0,\gamma_1,\gamma_2)$. Moreover, we have that the graph of the curve parameterized by $(\delta_0,\delta_1,\delta_2)$ is $C_1\sim C_0+ef$ (on $S$), and the curve $D$ is, as a divisor on $S$, $D\sim C_0+(d-h)f$.
\end{lem}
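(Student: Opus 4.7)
The plan is to realize $S$ explicitly as a Hirzebruch surface by determining the rank-two bundle $E$ on $\PP^1$ with $S = \PP(E)$, and then to pin down the divisor classes of the three relevant sections by intersection theory.

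First, $S = \{p = 0\} \subset \PP^2 \times \PP^1$ is a hypersurface of bidegree $(1,k)$, and the second projection displays it as a $\PP^1$-bundle over $\PP^1$ whose fiber over $(s_0,t_0)$ is the line $\sum_i \alpha_i(s_0,t_0)x_i = 0$ in $\PP^2$. Sheafifying the minimal free resolution of the homogeneous ideal $(\alpha_0,\alpha_1,\alpha_2)\subset K[s,t]$ (which has splitting type $(h,k-h)$) and twisting appropriately gives the exact sequence
$$0 \to \OO_{\PP^1}(-h)\oplus \OO_{\PP^1}(h-k) \to \OO_{\PP^1}^{\,3} \to \OO_{\PP^1}(k) \to 0,$$
so $E \cong \OO_{\PP^1}(-h)\oplus\OO_{\PP^1}(h-k)$. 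Because $2h \leq k$, this identifies $S$ with $\mathbb{F}_{k-2h}$, whence $e = k-2h$.

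Next, I intersect on $S$ using its bidegree $(1,k)$ inside $\PP^2 \times \PP^1$. Let $\eta = H|_S$ and $\varphi = F|_S$, where $H, F$ are the hyperplane classes of $\PP^2, \PP^1$ respectively. Using $[S] = H + kF$ together with the Chow ring relations $H^3 = 0$, $H^2 F = 1$ in $\PP^2 \times \PP^1$, one computes $\eta \cdot \varphi = 1$ and $\eta^2 = k$. Writing $\eta = C_0 + bf$ in the standard basis of $\mathrm{Pic}(\mathbb{F}_e)$ (with $C_0^2 = -e$, $C_0\cdot f = 1$, $f^2 = 0$), these two relations force $b = k-h$, so $\eta \sim C_0 + (k-h)f$. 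Now, the graphs in $\PP^2 \times \PP^1$ of the three maps $\PP^1 \to \PP^2$ given respectively by $(\gamma_0:\gamma_1:\gamma_2)$, $(\delta_0:\delta_1:\delta_2)$, and $(f_0:f_1:f_2)$ all lie on $S$, because the respective syzygy relations $\sum_i \alpha_i\gamma_i = \sum_i \alpha_i\delta_i = \sum_i \alpha_i f_i = 0$ say exactly that $p$ vanishes on each graph. Each such graph is a section of $S \to \PP^1$, hence of class $C_0 + \lambda f$ for some integer $\lambda$, and its intersection number with $\eta$ equals the degree of the defining triple (a generic line in $\PP^2$ pulls back to $\ell \times \PP^1$, meeting the graph of a degree-$m$ map in $m$ points). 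From $(C_0 + \lambda f)\cdot(C_0 + (k-h)f) = h + \lambda$ one reads off: for $(\gamma_i)$ the degree is $h$, so $\lambda = 0$ and this section equals $C_0$; for $(\delta_i)$ the degree is $k-h$, so $\lambda = k-2h = e$, giving $C_0 + ef \sim C_1$; and for $D$ the degree is $d$, so $\lambda = d-h$, giving $D \sim C_0 + (d-h)f$.

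The main technical obstacle is keeping the conventions straight, both so that the sheafified resolution genuinely yields $E = \OO(-h)\oplus\OO(h-k)$ with the correct ordering of summands (confirming $e = k-2h \geq 0$), and so that the numerical constraint coming from $\eta^2 = k$ indeed singles out the graph of $(\gamma_i)$ as the minimal section $C_0$ of $\mathbb{F}_e$ rather than the other natural section. Once these are settled, the three class identifications follow immediately from the single linear relation $C_\gamma \cdot \eta = h + \lambda = \deg(\gamma_i)$, and its analogues for $C_\delta$ and $D$.
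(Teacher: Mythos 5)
Your argument is correct, but it takes a genuinely different route from the paper's. You realize $S$ as the projectivization of the kernel bundle: sheafifying the resolution of $(\alpha_0,\alpha_1,\alpha_2)$ and twisting gives $0\to\OO_{\PP^1}(-h)\oplus\OO_{\PP^1}(h-k)\to\OO_{\PP^1}^{3}\to\OO_{\PP^1}(k)\to 0$ (legitimate because the $\alpha_i$, being a primitive syzygy, have no common zeros, so the evaluation map is surjective and the ideal sheaf is trivial), whence $S\cong\PP\bigl(\OO\oplus\OO(2h-k)\bigr)=\mathbb{F}_{k-2h}$ and $e=k-2h$ falls out immediately; you then use $[S]=H+kF$ in $\PP^2\times\PP^1$ to get $\eta^2=k$, $\eta\cdot f=1$, hence $\eta\sim C_0+(k-h)f$, and you pin down the classes of the three graphs by pairing with $\eta$, using that their $H$-degrees are $h$, $k-h$, $d$ (which requires, and you have, that each of $(\gamma_i)$, $(\delta_i)$, $(f_i)$ is without common factors). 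The paper instead argues entirely through the correspondence between unisecant divisors on $S$ and syzygies of $(\alpha_0,\alpha_1,\alpha_2)$: the minimal syzygy singles out $C_0$ as the unique unisecant class with a one-dimensional space of sections not containing fibers, the degree-$(k-h)$ syzygies give $C_1$, the linear equivalence $C_1\sim C_0+ef$ is checked by exhibiting an explicit principal divisor, and $D\sim C_0+(d-h)f$ because $(f_0,f_1,f_2)$ is a degree-$d$ syzygy; the value $e=k-2h$ then comes from standard ruled-surface facts. Your version buys a cleaner and more quantitative determination of $e$ (directly from the splitting of the kernel bundle, with no dimension counts or explicit rational functions), at the price of invoking the $\PP(E)$ formalism and the Chow ring of $\PP^2\times\PP^1$; the paper's version stays closer to the moving-line/syzygy dictionary it uses elsewhere. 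Two small points you should make explicit: identifying the graph of $(\gamma_i)$ with $C_0$ itself, not merely with its class, uses that for $e>0$ the negative section is the unique irreducible curve in its class, while for $2h=k$ (so $e=0$) one simply chooses that graph as $C_0$ — exactly the case the paper treats separately; and the step ``section $\Rightarrow$ class $C_0+\lambda f$'' should be justified by intersecting with the fiber class.
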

\medskip
\begin{proof}[Proof of the Lemma]  The unisecant divisors on $S$ are in 1-1 correspondence with syzygies of $(\alpha_0,\alpha_1,\alpha_2)$ (this is quite obvious, see also \cite{refCSC}), and for any such syzygy $(A_0,A_1,A_2)$, with $A_i\in K[s,t]_n$, $n\geq h$, we have:
$$
(A_0,A_1,A_2) = \lambda (\gamma_0,\gamma_1,\gamma_2) + \mu (\delta_0,\delta_1,\delta_2)
$$
where $\lambda\in K[s,t]_{n-h}$, $\mu \in K[s,t]_{n-k+h}$. For any such syzygy, the graph of the curve parameterized by $(A_0,A_1,A_2)$ in $\PP^2$  is a divisor on $S$ which is unisecant to its fibers (and is cut out in $\PP^2\times\PP^1$ by $x_0A_1-x_1A_0, x_0A_2-x_2A_0,x_1A_2-x_2A_1$). 

When $h\neq k/2$ we consider the unique effective divisor $C_0$ which is associated to the minimal syzygy  $(\gamma_0,\gamma_1,\gamma_2)$, we have that for any $n>h$, we get divisors which are linearly equivalent to $C_0+(n-h)f$ (where $f$ is a fiber); hence $C_0$ is the only unisecant divisor class which has $H^0(\mathcal{O}_{S}(C_0))=1$ and which does not contain divisors containing fibers, i.e. we have $S\cong \mathbb{F}_e$, as a Hirzebruch surface, with $C_0^2=-e$. Moreover, we get that all the effective divisors given by syzygies of degree $h+1\leq n \leq k-h-1$ are not irreducible and are given by $C_0$ plus $n-h$ fibers (the zeroes of $\lambda$); the first value of $n$ for which we have a class of divisors  $C_0+(n-h)f$ whose generic element is an irreducible one, is $n=k-h$, for which we have $C_1$ given by the graph of  $(\delta_0,\delta_1,\delta_2)$. Notice that the curve $C_1$ is actually linearly equivalent to $C_0+ef$, since they differ by a principal divisor  that, locally, is of the form $\frac{x_i\delta_j-x_j\delta_i}{\lambda(x_u\gamma_v-x_v\gamma_u)}$.
Hence, for what we know on rational ruled surfaces (e.g.  {see \cite{segre1884sulle}, or}  \cite[Ch. V, Thm 2.17]{refHa}), $C_1\sim C_0+(k-2h)f$, $e=k-2h$, and $C_0\cdot C_1=0$. 

If we consider the curve $D$, this is the graph of $(f_0,f_1,f_2)$, which is a syzygy of degree $d$ for $(\alpha_0,\alpha_1,\alpha_2)$, hence we have $D\sim C_0+(d-h)f$. 

When $h=k/2$ the choice of the $C_0$ is not unique, in fact we can choose as $(\gamma_0,\gamma_1,\gamma_2)$ and $(\delta_0,\delta_1,\delta_2)$ any basis of the module of syzygies of degree $h$, and the we can proceed as in the previous case (here $C_0 \sim C_1$).
\end{proof}

\medskip
Once the lemma is true, we can consider the linear system $H\sim C_0+(k-h)f$ on $S$; this is very ample if $h>0$, and it will give an embedding of $S$ as a surface of degree $H^2 = C_0^2+2(k-h)=2h-k+2k-2h=k$ in $\PP^{k+1}$ (i.e. as a rational normal scroll); the embedded scroll is of type $S_{h,k-h}$, i.e. it is given by the lines joining two common parameterizations for two rational normal curves, which are the embedding of $C_0$ and $C_1$, of degrees $h$, $k-h$, in a $\PP^h$, $\PP^{k-h}$, respectively. 

In the case $h=0$, the linear system is not very ample, but it still gives a map to $\PP^{k+1}$; it only contracts $C_0$ to a point, and we still get a rational normal scroll $S_{0,k}$, which is a cone. 

Notice also that the case $h=0$ corresponds to the case when $C$ is Ascenzi, since $(\gamma_0,\gamma_1,\gamma_2)$  has degree 0, hence, modulo a projective change of coordinates, we can assume $\alpha_2=0$, so  $(\alpha_0,\alpha_1,0)$ has a syzygy of degree 0, namely $(0,0,1)$; in this case, $C_0=\{x_0=x_1=0\}$, $(\delta_0,\delta_1,\delta_2)=(-\alpha_1,\alpha_0,0)$ and the scroll we get is the cone $S_{0,k}$. The curve $\mathcal{D}$, since $D\cdot C_0 = -e +d = d-k$, is singular in the vertex of the cone, with multiplicity $d-k$. Moreover, in the case $k=1$, $S_{0,1}$ is $\PP^2$ itself.
\medskip

Now we have to check that the curve $\mathcal{D}\subset S_{h,k-h}\subset \PP^{k+1}$ that we have constructed really projects to $C\subset \PP^2$.

The embedding of $S$ is defined by the linear system $\vert H\vert = |C_0+(k-h)f|$, which corresponds to syzygies of $(\alpha_0,\alpha_1,\alpha_2)$ of degree $k$; if $(A_{0,i},A_{1,i},A_{2,i})$ , $i=0,\ldots , k+1$ is a base for the module of syzygies of degree $k$, to any of them it is associated a divisor on $S$ which is the graph of the curve in $\PP^2$ parameterized by $(A_{0,i},A_{1,i},A_{2,i})$; such divisor can be locally defined by one of the three  functions 
{$$x_2A_{1,i}-x_1A_{2,i},\ x_0A_{2,i}-x_2A_{0,i},\ x_0A_{1,i}-x_1A_{0,i};$$ }
hence the map $\phi : S \rightarrow \PP^{k+1}$ given by $|H|$ is defined, locally, by such equations.

Consider  the three trivial syzygies: $(0,\alpha_2,-\alpha_1)$, $(\alpha_2,0,-\alpha_0)$, $(\alpha_1,-\alpha_0,0)$ as the last three syzygies of our base; they define a plane $\Pi \cong \PP^2$ inside our $\PP^{k+1}$. We want to check that the projection of $\PP^{k+1}$ on $\Pi$ maps $\mathcal{D}$ onto our starting curve $C$.

 The three local equations we saw above, for each of these syzygies, can be taken to be (by considering that on $S$ we have $x_0\alpha_0+x_1\alpha_1+x_2\alpha_2=0$):
$$
x_0\alpha_2,\  x_0\alpha_1,\ -x_2\alpha_2-x_1\alpha_1=x_0\alpha_0; \quad  x_1\alpha_2,\ -x_2\alpha_2-x_0\alpha_0=x_1\alpha_1,\   x_1\alpha_0;$$ $$-x_1\alpha_1-x_0\alpha_0= x_2\alpha_2,\  x_2\alpha_1,\  x_2\alpha_0.
$$
Since $(\alpha_0,\alpha_1,\alpha_2)$ cannot have common factors, at any point $(x_0,x_1,x_2;s,t)\in S$, there is at least one $\alpha_i(s,t)\neq 0$; so its image under the projection can be written $(\alpha_i(s,t)x_0,\alpha_i(s,t)x_1,\alpha_i(s,t)x_2)$. Hence the projection map from $\phi(S)=S_{h.k-h}$ to $\Pi$ can actually be written as:
$\phi'(x_0,x_1,x_2;s,t) \mapsto (x_0,x_1,x_2)$, so the image of the curve $\mathcal{D}$ is actually our starting curve $C$.

Notice that, by Proposition \ref{Projection}, $C$ cannot be the projection of a curve $D$ on a rational normal scroll in $\mathbb{P}^{k'+1}$ with $k'< k$.
\end{proof}

{
\begin{rem}\label{geometric} We would like to notice that the construction developed in the proof of Theorem \ref{ex-conjecture} can be extended in order to see our curve $\mathcal{D}\subset \PP^{k+1}$ as the projection of a rational normal curve.  \\
If we consider, on $S$, the linear system $\mathcal{L}$ which is given by the divisors in $|C_0+(d-h)f|$ which have $(d-k+1)$ fixed points $P_1, \ldots , P_{d-k}, P_{d-k+1}$, where $P_1$ and $P_{d-k+1}$ are on the same line and $P_1, \ldots , P_{d-k}$ are on $C_0\cap D$, we can embed $S$ into $\mathbb{P}^d$ as a rational normal scroll $S_{h,d-h-1}$ and the image of $D$ will be a rational normal curve. By projecting $S_{h,d-h-1}$ from $d-k-1$ other points $P_{d-k+2}, \ldots , P_{2d-k}$, where $P_i$ and $P_{d-k+i}$ are on the same line, we will get the scroll $S_{h,k-h}\subset \PP^{k+1}$ of our previous construction.
\end{rem}

\section{Open problems}\label{OpenPrb}

In this section we give a list of open problems.

\subsection{The center of projection}

As outlined in Remark \ref{geometric} it would be interesting to understand the relation between the splitting type of the plane curve $C$ and the geometry of the linear space $\Pi$, with  $\Pi\simeq \PP^{d-3}$ which is the center of  the projection that sends the rational normal curve $C_d\subset \PP^d$ to $C\subset \PP^2$. What we know, by  Remark \ref{geometric}, is that  there is a rational normal scroll containing  $C_d$  and intersecting  $\Pi$ in $d-k-1$ points.

\subsection{Singularities and splitting type}

Consider linear systems  $\mathcal{L}(d;m_1,\ldots ,m_s)$ of plane curves of degree $d$ and multiplicities at least $m_i$ at generic points $P_i\in \PP ^2$, $i=1,\ldots , s$, such that the generic curve $C$ in $\mathcal{L}(d;m_1,\ldots ,m_s)$ is rational. Find in which cases $(d;m_1,\ldots , m_s)$ determines the splitting type of $C$.  Of course a well known example when this happens is the Ascenzi case (e.g. a generic curve in $\mathcal{L}(d;d-1,1^{s})$, $s\leq 2d$, has splitting type $(1,d-1)$).   A different (non-Ascenzi) example of such a situation is given by $\mathcal{L}(8;3^7)$, i.e. curves of degree $d=8$ with seven points of multiplicities $m_i=3$. In this case the splitting type of a generic curve $C$ in this linear system has to be $(3,5)$ (e.g. see \cite{GHI2}).

Notice that just knowing the number and multiplicities of the singularities of $C$ in general is not enough to determine the splitting type of $C$. For example the generic projection in the plane of a degree 8 rational normal curve has only  nodes as singularities and splitting type $(4,4)$. On the other hand  the example given in the Introduction is of a plane curve degree 8 with only nodes as singularities but splitting type $(3,5)$, and, by Ascenzi's results, there is a plain curve $C$ which is a generic projection of a rational curve $D$ in $\mathbb{P}^3$ of type $(1,7)$ on a smooth quadric, which has splitting type $(2,6)$ and only nodes as singularities.  Clearly, in all cases, the nodes are not supported at generic points.\\

A deep analysis of the possibilities to describe singularities of certain rational plane curves via its parameterizations and its syzygies can be found in \cite{CoxetAl} 

\subsection{Curves in higher dimensional spaces} 
Generalize the type of construction that we give in Theorem \ref{ex-conjecture} for curves in $\PP^n$ with $n\geq 3$, i.e. give an explicit connection between the parameterization of $C$, its splitting type and the rational scrolls containing $C$ or a curve $D$ in higher space whose projection is $C$.  Notice that the idea of $\mu$-basis  is considered in \cite{refCSC} also in the case of curves in higher spaces.
}

\bigskip

\bibliographystyle{alpha}



\bibliography{Poncelet}

\def\biblio{

}
\end{document}